\theoremstyle{plain}
\newtheorem{thm}{Theorem}[section]
\newtheorem{propn}[thm]{Proposition}
\theoremstyle{definition}
\newtheorem{defn}[thm]{Definition}
\newtheorem{xmpl}{Example}
\theoremstyle{remark}
\newtheorem*{rmk}{Remarks}
\newcommand{\pd}[2]{\frac{\partial#1}{\partial#2}}
\newcommand{\vp}[1]{\frac{\partial}{\partial#1}}
\newcommand{\ode}[1]{\frac{\text{d}}{\text{d}#1}}
\renewcommand{\d}{\text{d}}
\newcommand{\Sp}{\text{Sp}}
\newcommand{\R}{\mathbb{R}}
\def\hook{{\mathchoice{\vrule height 0pt depth 1pt width 5pt
                         \vrule height 8pt depth 1pt \kern 3pt}
                      {\vrule height 0pt depth 1pt width 5pt
                         \vrule height 8pt depth 1pt \kern 3pt}
                      {\vrule height 0pt depth 0.5pt width 3pt
                         \vrule height 6pt depth 0.5pt width 0.5pt \kern 1pt}
                      {\vrule height 0pt depth 0.5pt width 3pt
                         \vrule height 6pt depth 0.5pt width 0.5pt \kern 1pt}
                         }}
\begin{document}
%\title{Evolution equations: Frobenius integrability, conservation laws and travelling waves}
\title[\resizebox{4.5in}{!}{Evolution equations: Frobenius integrability, conservation laws and travelling waves}]{Evolution equations: Frobenius integrability, conservation laws and travelling waves}
% symmetry solution techniques for second order PDEs
\author{Geoff Prince and Naghmana Tehseen}
\address{ Department of Mathematics and Statistics, La Trobe University, Victoria, 3086, Australia}
\email{g.prince@latrobe.edu.au, \ n.tehseen@latrobe.edu.au}
\date{\today}
\noindent \begin{abstract}
We give new results concerning the Frobenius integrability and solution of evolution equations admitting travelling wave solutions. In particular, we give a powerful result which explains the extraordinary integrability of some of these equations.  We also discuss ``local'' conservations laws for evolution equations in general and demonstrate all the results for the Korteweg de Vries equation.
 \end{abstract}
\keywords{geometric partial differential equations, Vessiot theory, Frobenius integrability, solvable structures, Korteweg de Vries, solitons}
\maketitle

\section{Introduction}

\noindent Travelling wave solutions of nonlinear partial differential equations (PDEs) have been studied extensively. Our aim is to present a geometric perspective on these solutions, viewed as the projection to the configuration space of the common level sets of functions on a jet bundle. In particular, these common level sets are the integral manifolds of a certain Frobenius integrable distribution, the Vessiot distribution.

 Our recent work  \cite{NT14,NTGP13}, encodes a system of PDEs as an exterior differential system (EDS) and we have provided a systematic approach to solving second order linear and non-linear PDEs in the presence symmetries ( known as solvable structures), a generalisation of the usual reduction by symmetry process, and based on the Vessiot distribution. We present an application of this technique to find travelling wave solutions of evolution PDEs. In doing so we give a new result (theorem \ref{propn 1}) identifying necessary criteria for the sort of integrability enjoyed by the Korteweg de Vries soliton.

The layout of this paper is as follows: In section 2 we review our approach to the geometric treatment of PDEs. We define solvable structures and discuss their utility for Frobenius integrable distributions. In this context we give a new result which will be important in discussing travelling wave solutions of evolution equations. In section 3 we discuss the travelling wave solutions of general nonlinear evolution PDEs and we present new results about their integrability.  In section 4 we give a new presentation of the  Korteweg de Vries soliton  as an example. In section 5 we describe the relationship between conserved densities, constants of the motion and first integrals of the Vessiot distribution.

\section{Solvable Structures and PDEs}

\noindent Consider a system of partial differential equations  \cite{Stormark} of $m$ independent variables $x^i$ and $n$ dependent variables  $u^j$,
\begin{align}G^a(x^i,u^j,u^j_{i_1},u^j_{i{_1}i{_2}}\ldots u^j_{i_1\ldots i_k})=0. \label{eqn:g.pde}\end{align}
The subscripts $1\leq i_1\leq\ldots \leq i_k\leq n$ are used to specify the partial derivatives of $u^j$, where $k$ is the maximum order of the system. We denote the total space by $E=X\times U$ as the product of the spaces $X$ and $U$ of the independent and dependent variables respectively. Usually $X=\R^m$ and $U=\R^n.$

A system of partial differential equations can be regarded as a submanifold $S$ of the jet bundle $J^k(\R^m,\R^n),$ and so the associated contact Pfaffian system on $J^k$ induces a Pfaffian system on $S$ by restriction. In this way we can think of the PDE as a manifold with a Pfaffian system. The solutions of the PDE correspond to $m-$dimensional integral manifolds of the Pfaffian system on which $dx^1\wedge\ldots \wedge dx^m\neq 0.$ This inequality is usually called an independence conditions and simply requires that our integral submanifolds suitably project to the graph space $X \times U$.

The contact Pfaffian system $\Omega^k(X,U)$ consists of all one$-$forms whose pull-back by a prolonged section of $X\times U$ vanishes. Locally it is spanned by the contact forms
\[\theta^j_I:=d u^j_I-\sum_{i=1}^m u_{I, i}^j d x^i,\]
where $I$ is a multi-index of order less than or equal to $k-1.$
%The dual distribution consists of all vector fields annihilated by the contact forms. The Vessiot distribution \cite{Fackerell85,Vessiot1924} is the restriction of this dual distribution to the submanifold defined by our differential equation condition $F = 0$. In general the Vessiot distribution, $D_V,$ is not integrable. So the problem is to find the largest projectible integrable sub-distributions of $D_V$ having integral manifolds which project to the graphs of solutions on $\R^m \times \R^n$ \cite{NT14,NTGP13}.

The dual distribution $\Omega^{k^\ast}(X,U)$ consists of all vector fields annihilated by contact forms $\Omega^k(X,U)$. A straightforward calculation \cite{Fesser09} shows that it is generated by
\begin{align}\label{Vessiot dist}
V_{i}^{(k)}&:=\partial_i+\sum_{j=1}^n \sum_{0\leq \left|I\right|<k}u_{I,i}^j\partial_{u^j_I},\quad 1\leq i\leq m,\nonumber \\
V_{j}^{I}&:=\partial_{u^j_I},\quad \quad \left|I\right|=k,\quad \quad 1\leq j \leq n.
\end{align}
The Vessiot distribution \cite{Fackerell85,Vessiot1924} is essentially the restriction of this dual distribution to the submanifold defined by our differential equation condition $G = 0$.  In general, the Vessiot distribution is not Frobenius integrable.

Vessiot \cite{Vessiot1924} has given an algorithm for constructing all the Frobenius integrable sub-distributions of any given distribution. If the distribution is not Frobenius integrable, then it is interesting to ask for the sub-distributions which are Frobenius integrable. He looks for generic sub-distributions that satisfy the algebraic constraint called by him \emph{involutions of degree r} and then shows that maximal such involutions can be deformed so as to be Frobenius integrable. He does this via the Cauchy-Kowalevski theorem. We have given an equivalent method \cite{NT14,NTGP13} to find the largest integrable Vessiot sub-distributions ({\em reduced Vessiot distributions}). We showed how to use a solvable  structure (see below) to integrate a PDE in the original coordinates, but most importantly, we showed how to impose a solvable structure on a PDE so as to determine particular largest solvable sub-distributions of the Vessiot distribution. For more details see \cite{NT14,NTGP13}.

 So the first problem is to locate the largest integrable sub-distributions which satisfy the transverse condition. Then we apply the integrating factor technique \cite{sherr} to integrate these integrable Vessiot sub-distributions. The paper by Sherring and Prince \cite{sherr} extends Lie's approach to integrating a Frobenius integrable distribution by using solvable structures. First we need some basic definitions.

%\begin{defn}
% A {\em solvable structure} \cite{sherr} for an $(n-p)$ dimensional distribution $D$ on a manifold $M^n$ is an ordered set of $p<n$ linearly independent vector fields $\{X_1,\dots,X_p\}$ with $\text{dim}(\Sp\{X_1,\ldots,X_p\}\bigoplus D)=n$ satisfying the nested relations
%  \begin{align*}
%  \mathcal{L}_{X_p}D~&\subset~ D,\\
%  \mathcal{L}_{X_{p-1}}(\Sp\{X_p\}\bigoplus D)~&\subset~\Sp\{X_p\}\bigoplus D,\\
%  \vdots\nonumber\\
%\mathcal{L}_{X_1}(\Sp\{X_2,\ldots,X_p\}\bigoplus D)~&\subset~\Sp\{X_2,\ldots,X_p\}\bigoplus D.
%\end{align*}
%\end{defn}
%Lie symmetry determination software package DIMSYM \cite{sherrdimsym} can be used to locate solvable structures for %distributions.
\begin{defn}
  A differential $p-$form $\Omega$ is {\em simple or decomposable} if it is the wedge product of $p$ $1-$forms.
\end{defn}
\begin{defn}
A {\em constraint} $1-$form $\theta$ for differential form $\Omega$ is any $1-$form satisfying $\theta\wedge{\Omega}=0,$ which implies $Y\righthalfcup\theta=0,~ \forall Y\in \text{ker}~{\Omega}.$
\end{defn}
\begin{defn}
A {\em characterising form} for an $p-$dimensional (vector) distribution $D$ is a form on $M^n$ of degree $(n-p)$ which is the exterior product of $(n-p)$ constraint forms.
\end{defn}
\begin{defn}
Let $\Omega\in\bigwedge^p(M^n)$ for some $p>1$ be decomposable, $\Omega$ is {\em Frobenius integrable} if $d\Omega=\lambda\wedge\Omega.$ Equivalently, $D:=\text{ker}\ \Omega$ is Frobenius integrable, that is, closed under the Lie bracket. Note that $\text{dim(ker}\ \Omega)= n-p,$ since $\Omega$ is simple.
\end{defn}

\noindent A {\em solvable structure} is then defined as follows:

 \begin{defn}
   Let $\Omega$ be a characterising $p-$form for an $(n-p)$ dimensional distribution $D$ on a manifold $M^n.$ An ordered set of $p$ linearly independent vector fields $\{X_1,\dots,X_p\}$ with $\text{dim}(\Sp\{X_1,\ldots,X_p\}\bigoplus D)=n$  forms a {\em solvable structure} \cite{sherr} for $\Omega$ (equivalently $D$) if the sequence of simple forms  $$\Omega,\ X_1 \righthalfcup \Omega,\ \dots\ ,\ X_{p-1}\righthalfcup \dots\righthalfcup  X_1\righthalfcup  \Omega$$ satisfies
 \begin{align*}
 \mathcal{L}_{X_1}\Omega &=\ell_1\Omega,\\
 \mathcal{L}_{X_2}(X_1\righthalfcup\Omega) &=\ell_2(X_1\righthalfcup\Omega),\\
 \vdots\\
 \mathcal{L}_{X_{p}}(X_{p-1}\righthalfcup\dots\righthalfcup X_1\righthalfcup\Omega) &=\ell_{p}(X_{p-1}\righthalfcup\dots\righthalfcup X_1\righthalfcup\Omega),
 \end{align*}
  for some smooth functions $\ell_1,\ldots,\ell_p$ on $M^n.$
\end{defn}
The Lie symmetry determination software package {\tt dimsym} \cite{sherrdimsym} can be used to locate solvable structures for distributions.

 Our main tool to integrate a Frobenius integrable distribution using solvable structures is the following theorem from \cite{sherr}:
\begin{thm} \label{sherr result}
 Let $\Omega$ be a decomposable $k-$form on a manifold $M^n,$ and let $\Sp(\{X_1,\ldots, X_k\})$ be a $k-$dimensional distribution on an open $U\subseteq M^n$ satisfying $X_i\righthalfcup \Omega\neq 0$ everywhere on $U.$ Further suppose that $\Sp(\{X_{j+1},\ldots, X_k\}\bigcup\text{ker}~\Omega)$ is integrable for some $j<k$ and that $X_i$ is a symmetry of $\Sp(\{X_{i+1},\ldots, X_{k}\}\bigcup \text{ker}~\Omega)~\text{for}~ i=1,\ldots, j.$

 Put $\sigma^i:=X_1\righthalfcup \ldots \righthalfcup \bar{X}_i\righthalfcup\ldots\righthalfcup X_k\righthalfcup\Omega,$
where $\bar{X}_i$ indicates that this argument is missing and $\omega^i:=\frac{\sigma^i}{X_i\righthalfcup \sigma^i}~ \text{for}~i=1,\ldots,k$
so that $\{\omega^1,\ldots, \omega^k\}$ is dual to $\{X_1,\ldots, X_k\}.$ Then $d \omega^1=0;~d \omega^2=0~\mod~\omega^1;~d \omega^3=0~\mod~\omega^1,\omega^2;~~~\ldots~;~ d \omega^j=0~\mod~\omega^1,\ldots,\omega^{j-1},$ so that locally
 \begin{align*}
   \omega^1&=d \gamma^1,\\
   \omega^2&=d \gamma^2-X_1(\gamma^2)d\gamma^1,\\
   \omega^3&=d \gamma^3-X_2(\gamma^3)d\gamma^2-(X_1(\gamma^3)
   -X_2(\gamma^3)X_1(\gamma^2))d\gamma^1,\\ \vdots\\
  \omega^{j}&\equiv d \gamma^{j}~\mod~d\gamma^1,\ldots, d\gamma^{j-1},
 \end{align*}
 for some $\gamma^1,\ldots,\gamma^j\in \bigwedge^0 T^* U.$ The system $\{\omega^{j+1},\ldots,\omega^k\}$ is integrable modulo $d\gamma^1,\ldots, d\gamma^j$ and locally $\Omega=\gamma^0 d\gamma^1\wedge d\gamma^2 \wedge \ldots \wedge d\gamma^j\wedge \omega^{j+1}\wedge\ldots \wedge \omega^k ~\text{for some}~\gamma^0\in \bigwedge^0(T^* U).$ Each $\gamma^i$ is uniquely defined up to the addition of arbitrary functions of $\gamma^1,\ldots,\gamma^{i-1}.$
 \end{thm}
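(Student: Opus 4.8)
The plan is to prove Theorem \ref{sherr result} by induction on $j$, exploiting the recursive structure of the interior products $\sigma^i$ and the Lie-derivative conditions defining a solvable structure. First I would set up the basic algebraic facts: since $\Omega$ is decomposable of degree $k$ with $\ker\Omega$ of dimension $n-k$, and $\{X_1,\dots,X_k\}$ is complementary to $\ker\Omega$, each $\sigma^i$ is a decomposable $1$-form (the full contraction of a decomposable $k$-form by $k-1$ of the $X$'s), $X_i\righthalfcup\sigma^i$ is a nonvanishing function, and the normalised forms $\omega^i=\sigma^i/(X_i\righthalfcup\sigma^i)$ satisfy $\omega^i(X_j)=\delta^i_j$ while annihilating $\ker\Omega$. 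That gives the claimed duality, and one checks that $\Omega = \gamma^0\,\omega^1\wedge\cdots\wedge\omega^k$ for a suitable function $\gamma^0$ by comparing kernels and evaluating on $X_1\wedge\cdots\wedge X_k$.

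Next I would establish the closure statements $d\omega^r \equiv 0 \bmod \omega^1,\dots,\omega^{r-1}$ for $r\le j$. The key computational identity is Cartan's formula $\mathcal{L}_{X}\alpha = X\righthalfcup d\alpha + d(X\righthalfcup\alpha)$, combined with the fact that $\mathcal{L}_{X}$ commutes with interior product by $Y$ up to the bracket term, $\mathcal{L}_X(Y\righthalfcup\alpha) = [X,Y]\righthalfcup\alpha + Y\righthalfcup\mathcal{L}_X\alpha$. From the solvable-structure hypothesis $\mathcal{L}_{X_r}(X_{r-1}\righthalfcup\cdots\righthalfcup X_1\righthalfcup\Omega) = \ell_r(X_{r-1}\righthalfcup\cdots\righthalfcup X_1\righthalfcup\Omega)$ together with the integrability of $\Sp(\{X_{r+1},\dots,X_k\}\cup\ker\Omega)$, I would show that $d\sigma^r$, hence $d\omega^r$, lies in the ideal generated by $\omega^1,\dots,\omega^{r-1}$: intuitively, contracting $d\Omega$ (which by $d\Omega=\lambda\wedge\Omega$-type reasoning on the integrable piece is controlled) against the right collection of vector fields, and using that the $X_i$ for $i\le r$ act as symmetries, forces all ``new'' directions to be eliminated. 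The passage from $d\omega^r\equiv 0 \bmod \omega^1,\dots,\omega^{r-1}$ to the explicit primitives $\omega^r = d\gamma^r - \sum (\cdots) d\gamma^s$ is then the classical Frobenius/Poincaré argument: inductively $\omega^1=d\gamma^1$ is exact, and assuming $\omega^1,\dots,\omega^{r-1}$ are expressed through $d\gamma^1,\dots,d\gamma^{r-1}$, the relation $d\omega^r = \sum_{s<r}\mu_s\wedge\omega^s$ combined with $d^2=0$ yields enough constraints to integrate $\omega^r$, with the coefficients $X_s(\gamma^r)$ appearing precisely because $\{X_i\}$ and $\{\omega^i\}$ are dual.

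Finally I would handle the tail: the system $\{\omega^{j+1},\dots,\omega^k\}$ spans the annihilator of $\Sp(\{X_1,\dots,X_j\}\cup\ker\Omega)$, and since $\Sp(\{X_{j+1},\dots,X_k\}\cup\ker\Omega)$ is integrable while each $X_i$, $i\le j$, is a symmetry of the corresponding flag, one deduces that this system is Frobenius integrable modulo $d\gamma^1,\dots,d\gamma^j$ — i.e. $d\omega^r \in \langle \omega^{j+1},\dots,\omega^k, d\gamma^1,\dots,d\gamma^j\rangle$ for $r>j$ — which is again a bracket-closure computation using $\mathcal{L}_{X_i}(X_{i-1}\righthalfcup\cdots\righthalfcup X_1\righthalfcup\Omega)=\ell_i(\cdots)$. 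The decomposition $\Omega=\gamma^0\,d\gamma^1\wedge\cdots\wedge d\gamma^j\wedge\omega^{j+1}\wedge\cdots\wedge\omega^k$ then follows by substituting $\omega^s = d\gamma^s \bmod (d\gamma^1,\dots,d\gamma^{s-1})$ into $\Omega=\gamma^0\,\omega^1\wedge\cdots\wedge\omega^k$ and absorbing the triangular change of basis into $\gamma^0$. The uniqueness of each $\gamma^i$ up to functions of $\gamma^1,\dots,\gamma^{i-1}$ is immediate from the primitive construction. I expect the main obstacle to be the bookkeeping in the inductive step that shows $d\sigma^r$ actually lands in the ideal generated by $\omega^1,\dots,\omega^{r-1}$: one must carefully track how the symmetry conditions interact with the nested interior products and the integrability of the complementary distribution, since a naive contraction of $d\Omega$ produces spurious terms that only cancel after invoking all the hypotheses simultaneously. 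This is essentially the content of the Sherring--Prince argument in \cite{sherr}, and the cleanest route is to reduce everything to repeated application of the two Cartan identities above rather than computing in coordinates.
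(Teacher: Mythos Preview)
The paper does not supply a proof of this theorem at all: it is quoted verbatim as a tool from the reference \cite{sherr} (Sherring and Prince, \emph{Trans.\ Amer.\ Math.\ Soc.}\ \textbf{334} (1992)), and the authors proceed immediately to use it without argument. There is therefore nothing in the present paper to compare your proposal against.

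That said, your sketch is a faithful outline of the original Sherring--Prince proof: the duality of $\{\omega^i\}$ and $\{X_i\}$, the inductive use of Cartan's formula and the bracket identity $\mathcal{L}_X(Y\righthalfcup\alpha)=[X,Y]\righthalfcup\alpha+Y\righthalfcup\mathcal{L}_X\alpha$ to obtain $d\omega^r\equiv 0\bmod\omega^1,\dots,\omega^{r-1}$, and the triangular substitution to reach the final factorisation of $\Omega$ are exactly the ingredients used in \cite{sherr}. Your own remark that ``this is essentially the content of the Sherring--Prince argument'' is accurate. The one place your outline is vague is the step showing $d\sigma^r$ lies in the ideal generated by $\omega^1,\dots,\omega^{r-1}$; in the original proof this is handled by first observing that the integrability of $\Sp(\{X_{r},\dots,X_k\}\cup\ker\Omega)$ forces $d\omega^r\equiv 0\bmod\omega^1,\dots,\omega^r$, and then the symmetry condition on $X_r$ kills the $\omega^r$-term. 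If you intend to write this proof out in full you should make that two-stage structure explicit rather than trying to obtain the conclusion in one contraction.
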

We will now demonstrate the utility of a closed characterising $p$-form, $\Omega,$ on $M^n.$ Firstly note that for $p<n$ the closure of $\Omega$ implies its Frobenius integrability. As shown in the proposition below, in the presence of a solvable structure this closure allows the immediate identification of closed constraint one-forms (factors of $\Omega$) for the distribution in question. (So far as we know this is a new result.)

\begin{propn}\label{propn2}
Let  $\Omega$ be a closed characterising $p$-form  for a distribution $D$ on $M^n$ with $p<n$. Let  $\{X_1,\dots,X_p\}$ be a solvable structure for $\Omega$ with the additional properties that
  \begin{align*}
  \mathcal{L}_{X_p}\Omega=&\ 0,\\
  \mathcal{L}_{X_{p-1}}(X_p\righthalfcup\Omega)=&\ 0,\\
  \vdots &\ \nonumber\\
\mathcal{L}_{X_1}(X_2\righthalfcup\dots\righthalfcup X_p\righthalfcup \Omega)=&\ 0.
\end{align*}
Then the forms in the following sequence are all simple and closed and the last is a closed one-form factor of $\Omega$:
\begin{equation*}
\Omega,\ X_p \righthalfcup \Omega,\ \dots \ , X_2\righthalfcup\dots\righthalfcup X_p\righthalfcup \Omega.
\end{equation*}
Moreover, if $\Omega(X_p,\dots,X_1)$ is not constant (it is necessarily non-zero) then
\begin{equation*}
d(\Omega(X_p,\dots,X_1))\wedge(X_2\righthalfcup\dots\righthalfcup X_p\righthalfcup \Omega)=0,
\end{equation*}
effectively integrating the last closed form in the sequence.

\end{propn}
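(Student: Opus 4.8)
The plan is to run everything off Cartan's identity $\mathcal{L}_X=\iota_X d+d\,\iota_X$ (I write $\iota_X$ for $X\righthalfcup$), together with two elementary facts about a decomposable form $\omega$: (i) $\iota_X\omega$ is again decomposable, for any $X$; and (ii) if $\iota_X\omega\neq0$ then $\omega=\beta\wedge\iota_X\omega$ for some $1$-form $\beta$ — both are immediate on passing to a coframe adapted to a decomposition of $\omega$. Set $\Omega_0:=\Omega$ and $\Omega_{r+1}:=\iota_{X_{p-r}}\Omega_r$, so that $\Omega_r=X_{p-r+1}\righthalfcup\dots\righthalfcup X_p\righthalfcup\Omega$ and $\Omega_{p-1}$ is the last form in the displayed sequence; the hypotheses read $\mathcal{L}_{X_{p-r}}\Omega_r=0$ for $r=0,\dots,p-1$.

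Simplicity is then a downward induction: $\Omega$ is decomposable because a characterising form is by definition an exterior product of constraint $1$-forms, so by (i) every $\Omega_r$ is decomposable. Closedness is an upward induction on $r$: $d\Omega_0=d\Omega=0$, and if $d\Omega_r=0$ then Cartan's identity gives $d\Omega_{r+1}=d(\iota_{X_{p-r}}\Omega_r)=\mathcal{L}_{X_{p-r}}\Omega_r-\iota_{X_{p-r}}d\Omega_r=0$; hence every $\Omega_r$ is closed, and $\Omega_{p-1}$ is a closed $1$-form.

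That $\Omega_{p-1}$ is a factor of $\Omega$ follows by iterating (ii). First set $f:=\Omega(X_p,\dots,X_1)=\iota_{X_1}\Omega_{p-1}$ and observe $f\neq0$: $\Omega$ is a decomposable $p$-form whose kernel is the $(n-p)$-dimensional distribution $D$, the vectors $X_1,\dots,X_p$ project to a basis of $TM/D$ since $\Sp\{X_1,\dots,X_p\}$ is complementary to $D$, so $\Omega$ evaluated on them is non-zero — this is the parenthetical claim. In particular each $\Omega_r\neq0$, so (ii) applied repeatedly down the chain yields $1$-forms $\beta_0,\dots,\beta_{p-2}$ with $\Omega=\beta_0\wedge\dots\wedge\beta_{p-2}\wedge\Omega_{p-1}$, exhibiting $\Omega_{p-1}$ as a closed $1$-form factor of $\Omega$.

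For the ``moreover'', since $\Omega_{p-1}$ is closed and $\iota_{X_1}\Omega_{p-1}=f$, Cartan's identity gives $\mathcal{L}_{X_1}\Omega_{p-1}=d(\iota_{X_1}\Omega_{p-1})=df$, so $df\wedge\Omega_{p-1}=\mathcal{L}_{X_1}\Omega_{p-1}\wedge\Omega_{p-1}$, which vanishes as soon as $\mathcal{L}_{X_1}\Omega_{p-1}$ is a multiple of $\Omega_{p-1}$, i.e.\ as soon as $X_1$ is a symmetry of the hyperplane distribution $\ker\Omega_{p-1}=D+\Sp\{X_2,\dots,X_p\}$. Writing $\Omega_{p-1}=dg$ locally (Poincar\'e), $df\wedge dg=0$ says that $f$ is a function of $g$ alone where $df\neq0$, so the already-computed $f$ serves in place of $g$ — this is the ``effective integration''. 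The subtle point, and really the only obstacle worth flagging, is that the final listed condition $\mathcal{L}_{X_1}(X_2\righthalfcup\dots\righthalfcup X_p\righthalfcup\Omega)=0$ together with $d\Omega_{p-1}=0$ already forces $df=0$, so $f$ is constant under the literal hypotheses; the content of the clause lives in the case where that condition is relaxed to its generic proportional form $\mathcal{L}_{X_1}\Omega_{p-1}=\ell\,\Omega_{p-1}$ (the ordinary solvable-structure condition for $X_1$ relative to $\ker\Omega_{p-1}$), under which $f$ may be non-constant while $df\wedge\Omega_{p-1}=0$ still holds. Everything else is routine exterior calculus; the care needed is only in the index bookkeeping of the two inductions and in verifying each $\Omega_r\neq0$ before applying (ii).
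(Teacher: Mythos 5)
Your proof is correct, and on the two points the paper's own proof actually argues --- simplicity and closedness of each form in the sequence --- it follows the same route: simplicity because contracting a decomposable form preserves decomposability (the paper phrases this via maximality of the kernel dimension), and closedness by an upward induction on Cartan's identity $\mathcal{L}_X\alpha=X\righthalfcup d\alpha+d(X\righthalfcup\alpha)$ starting from $d\Omega=0$. You supply two things the published proof leaves implicit: the verification that $\Omega(X_p,\dots,X_1)\neq 0$ (hence that each contraction in the chain is non-zero), and the explicit factorisation $\Omega=\beta_0\wedge\dots\wedge\beta_{p-2}\wedge\Omega_{p-1}$ showing that the last form really is a one-form factor. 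Your flag on the ``moreover'' clause is also correct and worth recording: writing $\alpha:=X_2\righthalfcup\dots\righthalfcup X_p\righthalfcup\Omega$, the closedness $d\alpha=0$ combined with the final listed hypothesis $\mathcal{L}_{X_1}\alpha=0$ forces $d(\Omega(X_p,\dots,X_1))=d(X_1\righthalfcup\alpha)=\mathcal{L}_{X_1}\alpha-X_1\righthalfcup d\alpha=0$, so under the literal hypotheses $\Omega(X_p,\dots,X_1)$ is locally constant and the clause is vacuous. The intended reading is the relaxation you identify: with only the solvable-structure condition $\mathcal{L}_{X_1}\alpha=\ell\,\alpha$ on the last vector field, one gets $d(\Omega(X_p,\dots,X_1))=\ell\,\alpha$, hence $d(\Omega(X_p,\dots,X_1))\wedge\alpha=0$ with $\Omega(X_p,\dots,X_1)$ generically non-constant. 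This is exactly how the mechanism operates in the paper's Example 1, where the scaling symmetry satisfies $\mathcal{L}_{X_3}\Omega_\theta=3\Omega_\theta$ rather than $0$ and the full contraction $X_3\righthalfcup X_2\righthalfcup X_1\righthalfcup\Omega_\theta$ recovers the non-constant first integral $f^1$ up to a constant factor. So there is no gap in your argument; the only discrepancy you found lies in the statement's hypotheses, not in your proof.
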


\begin{proof}
Note that simple forms are exactly those with maximal (and constant) dimension kernels. If $\omega$ is a simple form and $Z$ is not in its kernel then $Z \righthalfcup\omega$ is simple because its kernel is of dimension one greater than that of $\omega.$ So each form in the above sequence is simple. \newline
The closure of each form follows from the Lie derivative formula: $\mathcal{L}_X\alpha=X\righthalfcup d\alpha+d(X\righthalfcup\alpha)$ and the closure of $\Omega.$ For example, $d(X_p \righthalfcup\Omega)=\mathcal{L}_{X_p}\Omega - X_p\righthalfcup d\Omega=0.$
\end{proof}

\begin{rmk} In the best case, where every ordering of $1,\dots, p$ produces a solvable structure from $\{X_1,\dots,X_p\},$ this proposition produces all the closed one-form factors of $\Omega$ and we explicitly find the integral manifolds of the corresponding distribution.
\end{rmk}

In the following section, we will consider an evolution PDE of one dependent variable and two independent variables and explain the application of our technique developed in \cite{NT14,NTGP13} to find the travelling wave solutions.

\section{Evolution PDE and travelling wave solution}
Suppose we have an evolution PDE of order $k$ of one dependent variable $u$ and two independent variables $t,x$ given by
\begin{equation}\label{Gevol}
  u_{t}=F(t,x,u,u_x,u_{xx},u_{xxx},\ldots),
\end{equation}
for some smooth function $F.$ The embedded codimension one submanifold
\[S:=\{(t,x,u,u_t,u_x,u_{xx},u_{tx},\ldots)\in J^k(\R^2,\R)~ |~ u_{t}-F(t,x,u,u_x,u_{xx},\ldots)=0\},\]
is a subset of $J^k(\R^2,\R).$ The local solution of the PDE is described by the map
\[i:(t,x,u,u_x,u_{xx},u_{tx},\ldots)\hookrightarrow (t,x,u,F,u_x,u_{xx},u_{tx},\ldots).\]

We are interested in travelling wave solutions \cite{Drazbook89}, $u(x,t)=f(x-ct)$, of the evolution equation \eqref{Gevol}. Equivalently,
\begin{equation}\label{Tw:eq}
  u_t+cu_x=0.
\end{equation}
We add equation \eqref{Tw:eq} and its differential consequences, via the RIF algorithm \cite{Maple,Reid96}, as extra conditions to \eqref{Gevol}. These differential consequences are
\begin{align*}
u_{tx}=-cu_{xx},\quad u_{tt}=c^2u_{xx},\quad u_{ttt}=-c^3u_{xxx},\quad u_{ttx}=c^2u_{xxx},\quad u_{txx}=-cu_{xxx},\ldots
\end{align*}
The map $i$ becomes
\[i:(t,x,u,u_x,u_{xx},u_{xxx},\ldots)\hookrightarrow (t,x,u,F,u_x,u_{xx},-cu_{xx},c^2u_{xx}\ldots).\]

%The equation \eqref{Gevol} becomes \question{This is wrong because (2) already looks like this: we are changing the contact system not the eqn}
%\begin{equation}
%  u_t={F}(t,x,u,u_{xx},u_{xxx},u_{xxxx},u_{xxxxx},\ldots)
%\end{equation}
%Now the embedded submanifold  $\tilde{S},$
%\[\tilde{S}:=\{(t,x,\ldots,u_{ttt},\ldots)\in J^k(\R^k,\R)~ |~ u_{t}-{F}(t,x,u,u_{xx},u_{xxx},u_{xxxx},\ldots)=0\}\]
%is a subset of $J^k(\R^2,\R).$ A local solution of the PDE is described by the map $i:\tilde{S}\hookrightarrow J^k(\R^2,\R),$ i.e. %\[i:(t,x,u_{xx},u_{xxx},,u_{xxxx},\ldots)\hookrightarrow (t,x,{F},-\frac{1}{c}{F},c^2u_{xx},-cu_{xx},u_{xx},-c^3u_{xxx},\ldots).\]
The $k^{th}$ order pulled-back (by $i^*$) contact system $D_V^{\ast}$ on $J^k(\R^2,\R)$ is generated by

\begin{align}
\theta^1&:=du+\frac{F}{c}(dx-cdt),\nonumber\\
  \theta^2&:=dF+cu_{xx}(dx-cdt),\\
  &\vdots \nonumber\\
  \theta^{|A|}&:=du_{A-1}-u_{A}(dx-cdt),\quad A=(x,\ldots,x),\quad 3\leq |A| \leq k.\nonumber
\end{align}
with standard characterising form $\Omega_\theta:=\theta^1\wedge\ldots\wedge\theta^k.$\newline
The next step is to determine the Frobenius integrability of this contact system, equivalently of $\Omega_\theta$, i.e.
\begin{align*}
  d\theta^a\wedge\Omega_\theta=0 \iff \theta^a\wedge\d\Omega_\theta=0, \quad a=1,\dots,k
\end{align*}

%\noindent A calculation shows that the contact system $D_V^{\perp}$ is  Frobenius integrable if and only if
%\begin{equation}\label{FI:eq}
% {F}_t+c{F}_x=0.
% \end{equation}
Once we have a Frobenius integrable Vessiot distribution its integration follows, for example, using solvable symmetry structures (see Theorem \ref{sherr result}). Projection of these integral submanifolds to $\R^2\times\R$ produces graphs of solutions of  our PDE.

The next result is key to deciding whether an evolution equation has a Frobenius integrable Vessiot distribution without any reduction. It explains why, for example, the Korteweg de Vries, is so amenable to solution as we will see.
%In important classes of third and higher order evolution PDEs with the travelling wave ansatz \eqref{Tw:eq} the characterising form $\Omega_\theta$ is actually closed once it is Frobenius integrable (that is, its kernel is Frobenius integrable). For example, in the case of third order PDEs the Frobenius integrable characterising form  \question{make this a propn} $\Omega_\theta=\theta^1\wedge\theta^2\wedge\theta^3$ is closed if and only if  $F_{u_{xx}}=0.$ And the characterising form of fourth order evolution PDEs is closed if and only if  $F_{u_{xxx}}=0$ in the presence of \eqref{FI:eq}. \question{GP: What does in the presence of (5) mean?? NT: I think it should be like this: The Frobenius integrable characterising form of fourth order evolution PDEs is  closed if and only if  $F_{u_{xxx}}=0$ }

\begin{thm}\label{propn 1}
  Let $\Omega_\theta$ be the standard characterising $k-$form for an evolution PDE \eqref{Gevol} on $J^k(\R^2,\R),~k\geq 3$ with the travelling wave ansatz \eqref{Tw:eq}.
  \begin{itemize}
  \item[1.] $\Omega_\theta$ is Frobenius integrable if and only if \newline
  $$dF\wedge\phi\equiv 0\ \text{mod}\ \{du,du_{xx},\dots,du_A\}\quad |A|=k \iff F_t+cF_x=0$$
  and
  \smallskip
  \item[2.] $d\Omega_\theta=0$ if and only if $\Omega_\theta$ is Frobenius integrable and $F_{u_{A-1}}=0,$
  \end{itemize}
  where $\phi:=dx-cdt$ and $|A-1|=k-1.$
  \end{thm}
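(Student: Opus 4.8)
The plan is to work not on all of $J^k(\R^2,\R)$ but on the $(k+2)$-dimensional submanifold $\hat S$ cut out by \eqref{Gevol}, \eqref{Tw:eq} and their differential consequences — the image of the map $i$. Eliminating $u_t$ between the evolution equation and the travelling-wave condition forces the relation $F+cu_x=0$, so $\hat S$ carries coordinates $t,x,u,u_x,\dots,u_{x^{k-1}}$ with $u_A=u_{x^k}$ determined implicitly; this is legitimate on the open set where $F_{u_A}\neq 0$, the natural hypothesis for a genuine order-$k$ equation. Throughout, $F_t,F_x,F_{u_I}$ denote the partial derivatives of the abstract $F$ of \eqref{Gevol} with $t,x,u,u_x,\dots,u_{x^k}$ regarded as independent (on $\hat S$ itself $F$ equals $-cu_x$), $\phi=dx-cdt$, and $d\phi=0$ because $c$ is constant.

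First I would differentiate the contact generators. From $\theta^2=dF+cu_{xx}\phi$ one gets $dF=\theta^2-cu_{xx}\phi$, hence $d\theta^1=\tfrac{1}{c}\,dF\wedge\phi=\tfrac{1}{c}\,\theta^2\wedge\phi$; then $d^2F=0$ gives $d\theta^2=c\,du_{xx}\wedge\phi=c\,\theta^3\wedge\phi$ via $du_{xx}=\theta^3+u_{xxx}\phi$; and, for $3\le j\le k-1$, $d\theta^j=-du_{x^j}\wedge\phi=-\theta^{j+1}\wedge\phi$. Only the top generator behaves differently: $d\theta^k=-du_A\wedge\phi$, where on $\hat S$ the one-form $du_A$ is the differential of the implicit function $u_A$. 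Each of $d\theta^1,\dots,d\theta^{k-1}$ is thus a multiple of some $\theta^a\wedge\phi$ with $\theta^a$ a factor of $\Omega_\theta$, so $d\theta^a\wedge\Omega_\theta=0$ automatically for $a\le k-1$; since $\Omega_\theta$ is decomposable, Frobenius integrability is equivalent to $d\theta^a\wedge\Omega_\theta=0$ for every $a$, hence to the single requirement $d\theta^k\wedge\Omega_\theta=0$, that is, $du_A\wedge\phi\wedge\Omega_\theta=0$.

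The crux of part 1 is then a rank computation on $T^*\hat S$. Differentiating the defining relation $F+cu_x=0$ of $\hat S$ and using $dx=\phi+cdt$, one solves for $du_A$:
\[
F_{u_A}\,du_A=-\Big[(F_t+cF_x)\,dt+F_x\,\phi+F_u\,du+(F_{u_x}+c)\,du_x+\sum_{j=2}^{k-1}F_{u_{x^j}}\,du_{x^j}\Big].
\]
Now $\{\phi,\theta^1,\dots,\theta^k\}$ is a coframe for $\mathrm{Ann}(\partial_t+c\partial_x)$ in $T^*\hat S$ — modulo $\phi$ the $\theta^a$ are nonzero multiples of $du,du_x,du_{xx},\dots,du_{x^{k-1}}$ — while $\{dt,\phi,\theta^1,\dots,\theta^k\}$ is a full coframe. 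Hence, substituting $du=\theta^1-\tfrac{F}{c}\phi$, $du_x=-\tfrac{1}{c}\theta^2+u_{xx}\phi$ and $du_{x^j}=\theta^{j+1}+u_{x^{j+1}}\phi$, each of $du\wedge\phi\wedge\Omega_\theta$, $du_x\wedge\phi\wedge\Omega_\theta$ and $du_{x^j}\wedge\phi\wedge\Omega_\theta$ $(2\le j\le k-1)$ vanishes by a repeated factor, whereas $dt\wedge\phi\wedge\Omega_\theta\neq 0$. The displayed formula then yields $du_A\wedge\phi\wedge\Omega_\theta=-\tfrac{F_t+cF_x}{F_{u_A}}\,dt\wedge\phi\wedge\Omega_\theta$, which vanishes exactly when $F_t+cF_x=0$; and the congruence $dF\wedge\phi\equiv 0\ \text{mod}\ \{du,du_{xx},\dots,du_A\}$ of the statement is, after the same elimination of $du_x$ on $\hat S$, just this condition in another guise. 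That settles part 1.

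For part 2, observe that $d\Omega_\theta=0$ is the case $\lambda=0$ of $d\Omega_\theta=\lambda\wedge\Omega_\theta$, so it forces integrability and we may assume $F_t+cF_x=0$. Expanding $d\Omega_\theta=\sum_a(-1)^{a-1}\theta^1\wedge\dots\wedge d\theta^a\wedge\dots\wedge\theta^k$, every term with $a\le k-1$ dies by a repeated factor, leaving $d\Omega_\theta=(-1)^k\,\theta^1\wedge\dots\wedge\theta^{k-1}\wedge du_A\wedge\phi$. Under integrability $du_A=\alpha\phi+\sum_a\beta_a\theta^a$, and only the $\beta_k\theta^k$ summand survives when this is inserted into $\theta^1\wedge\dots\wedge\theta^{k-1}\wedge(\cdot)\wedge\phi$, so $d\Omega_\theta=\pm\beta_k\,\Omega_\theta\wedge\phi$ with $\Omega_\theta\wedge\phi\neq 0$. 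Since $\theta^k$ is the only coframe member carrying $du_{A-1}=du_{x^{k-1}}$, with coefficient $1$, $\beta_k$ is the coefficient of $du_{A-1}$ in $du_A$, namely $-F_{u_{A-1}}/F_{u_A}$ by the displayed formula; hence $d\Omega_\theta=0$ iff $F_{u_{A-1}}=0$, which is part 2. The step I expect to be most delicate is pinning down the ambient object: recognising that \eqref{Tw:eq} added to \eqref{Gevol} forces $F=-cu_x$, so the relevant manifold has dimension $k+2$ not $k+3$, and that $F_t,F_x,F_{u_I}$ in the statement are derivatives of the abstract $F$. With that fixed, the only genuine content is the rank claim that $\{\phi,\theta^1,\dots,\theta^k\}$ spans exactly $\mathrm{Ann}(\partial_t+c\partial_x)$; the rest is the Cartan identity, $d\phi=0$ and wedge bookkeeping, with a minor loose end on the locus $F_{u_A}=0$, handled by restricting to the complementary open set.
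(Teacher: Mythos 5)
Your argument is correct and its core is the same as the paper's: compute each $d\theta^a$, observe that for $a\le k-1$ it carries $\theta^{a+1}\wedge\phi$ and so contributes nothing to $d\theta^a\wedge\Omega_\theta$ or to $d\Omega_\theta$, and then read the obstruction to integrability (resp.\ closure) off the single surviving term coming from $d\theta^k=-du_A\wedge\phi$. Where you diverge from the paper is in how you coordinatise the constraint manifold and how you read $F$: the paper solves $F+cu_x=0$ for $u_x$ (needing $c+F_{u_x}\neq 0$), keeps $u_A$ as a coordinate, and treats $F$ as the \emph{reduced} function of $t,x,u,u_{xx},\dots,u_A$ — this is why it can assert $dF\in\Sp\{dt,dx,du,du_{xx},\dots,du_A\}$ and why in section 4 the KdV $F$ is $\tfrac{c}{c-u}u_{xxx}$ rather than $-uu_x+u_{xxx}$ — whereas you solve for $u_A$ (needing $F_{u_A}\neq 0$) and keep the abstract $F$. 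The two readings are reconciled by the implicit-function chain rule, $\tilde F_{t}+c\tilde F_x=\tfrac{c}{c+F_{u_x}}(F_t+cF_x)$ and $\tilde F_{u_{A-1}}=\tfrac{c}{c+F_{u_x}}F_{u_{A-1}}$, so the vanishing conditions coincide and your conclusions match the theorem; it would be worth one sentence in your write-up making this identification explicit, since the congruence $dF\wedge\phi\equiv 0 \bmod \{du,du_{xx},\dots,du_A\}$ as literally stated presupposes the paper's coordinates (no $du_x$ in the list). What your version buys is an explicit coframe/rank argument ($\{dt,\phi,\theta^1,\dots,\theta^k\}$ spanning $T^*\hat S$) and the formula $du_A\wedge\phi\wedge\Omega_\theta=-\tfrac{F_t+cF_x}{F_{u_A}}\,dt\wedge\phi\wedge\Omega_\theta$, which the paper leaves implicit; the cost is the extra genericity hypothesis $F_{u_A}\neq 0$, though that is in any case needed for the contact system to have rank $k$.
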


  \begin{proof}  First of all note that, with $|A|=k,$
  $$dF\in\Sp\{dt,\ dx,\ du,\ du_{xx},\dots, du_A\}$$
  and that
    $$d\theta^1=\frac{1}{c}dF\wedge\phi,\ d\theta^2=cdu_{xx}\wedge\phi,\ d\theta^{|B|}=-du_B\wedge\phi, \ 2<|B|\le k.$$
  Then
\begin{itemize}
  \item[1.] $d\theta^1\wedge\Omega_\theta=0, \ d\theta^2\wedge\Omega_\theta=0,\ \d\theta^{|B|}\wedge\Omega_\theta=0,\ 2<|B|<k$ and
  $$\d\theta^k\wedge\Omega_\theta=-du_A\wedge\phi\wedge du\wedge dF \wedge du_{xx}\wedge\dots\wedge du_{A-1}.$$
  Hence $\Omega_\theta$ is Frobenius integrable if and only if
  $$dF\wedge\phi\equiv 0\ \text{mod}\ \{du,du_{xx},\dots,du_A\}\ \iff  F_t+cF_x=0$$
    \item[2.]
  Now
  $$\quad d\Omega_\theta =d \theta^1\wedge\theta^2\wedge\ldots\wedge \theta^k+\ldots+(-1)^{k-1}\theta^1\wedge\theta^2\wedge\ldots\wedge \theta^{k-1}\wedge d \theta^k,$$
  and every term except the last is trivially zero. The last term is
  \begin{align*}
  &(-1)^{k}du\wedge dF\wedge du_{xx}\wedge\ldots\wedge du_{A-2}\wedge du_A\wedge \phi\\
  =\ &(-1)^{k}du\wedge (F_xdx+F_tdt)\wedge du_{xx}\wedge\ldots\wedge du_{A-2}\wedge du_A\wedge \phi\\
  +\ &(-1)^{k}du\wedge F_{u_{A-1}}du_{A-1}\wedge du_{xx}\wedge\ldots\wedge du_{A-2}\wedge du_A\wedge \phi,
  \end{align*}
  hence the closure result.
\end{itemize}
\end{proof}

%\begin{proposition}
%  Let $\Omega_\theta$ be a Frobenius integrable characterising $k-$form for evolution PDEs on $J^k(\R^2,\R),~k\geq 3$ with the travelling wave ansatz \eqref{Tw:eq}. The characterising form $\Omega_\theta$ is closed if and only if%  \[F_{u_B}=0,\quad B=(x,\ldots,x),\quad |B|=k-1.\]
%  \begin{proof}
%   By definition, $\Omega_\theta$ is closed  if $d\Omega_\theta=0.$ It is easy to see that
%   \begin{align*}
%     d\Omega_\theta &=d \theta^1\wedge\theta^2\wedge\ldots\wedge \theta^k+\ldots+(-1)^{k-1}\theta^1\wedge\theta^2\wedge\ldots\wedge d \theta^k,\\
%     &=0+\ldots+0+(-1)^{k-1}\theta^1\wedge\theta^2\wedge\ldots\wedge d \theta^k.
%   \end{align*}
%Note that, $d\Omega_\theta$ is a $(k+1)-$form  and the closure condition impose that $F_{u_{B}}=0,~~|B|=k-1,$ for $k\geq 3.$
%  \end{proof}
%\end{proposition}

\begin{xmpl}
Consider the evolution equation
\begin{equation}
u_t=u_{xxx}\label{Ex1:eq}
\end{equation}
along with the travelling wave ansatz \eqref{Tw:eq}. $D_V^{\ast}$ is spanned by
\begin{align*}\theta^1&:=du+\frac{1}{c}u_{xxx}(dx-cdt),\ \theta^2:=du_{xxx}+cu_{xx}(dx-cdt),\\
\theta^3&:=du_{xx}-u_{xxx}(dx-cdt),
%theta(1):=d u-F1*d t-F2*d x ;
%theta(2):=d F1-c**2*uxx*d t+c*uxx*d x;
%theta(3):=d uxx+c*uxxx*d t-uxxx*d x;
\end{align*}
and $D_V$ is spanned by
\begin{align*}
V_1&:=\vp{t}+u_{xxx}\vp{u}-cu_{xxx}\vp{u_{xx}}+c^2u_{xx}\vp{u_{xxx}},\\
V_2&:=\vp{x}-\frac{1}{c}u_{xxx}\vp{u}+u_{xxx}\vp{u_{xx}}-cu_{xx}\vp{u_{xxx}}.
%V 1:=@t+F1*@u-c*uxxx*@uxx+c**2*uxx*@uxxx;
%V 2:=@x+F2*@u+uxxx*@uxx -c*uxx*@uxxx;
\end{align*}
$D_V^{\ast}$ is Frobenius integrable with $d\Omega_\theta=0$. A solvable structure for $D_V$ (found with {\tt dimsym} \cite{sherrdimsym}) is $\{X_3,X_2,X_1\}$ where
\begin{align*}
X_1&:=\vp{t}, \ X_2:=\vp{x},\\
X_3&:=u\vp{u}+u_{xx}\vp{u_{xx}}+u_{xxx}\vp{u_{xxx}}.
\end{align*}
It can be immediately seen that $[X_a, V_i]=0,\ [X_1,X_2]=0=[X_1,X_3]$ and $[X_2,X_3]=X_2.$
A simple calculation shows that
$$\mathcal{L}_{X_1}\Omega_\theta=0,\ \mathcal{L}_{X_2}\Omega_\theta=0,\ \mathcal{L}_{X_3}\Omega_\theta=3\Omega_\theta.$$
Proposition \ref{propn2} indicates that $X_1 \righthalfcup X_2 \righthalfcup \Omega_\theta$ is a closed one-form factor of $\Omega_\theta.$ This integrates to the following {\em first integral} of $D_V$ (see the section 4 for a definition):
$$f^1:=cu_{xx}^2+u_{xxx}^2 $$
It is a simple matter to scale $X_3$ to $\tilde{X}_3:=(f^1)^{-\frac{3}{2}}X_3$  so that $\mathcal{L}_{\tilde{X}_3}\Omega_\theta=0$ and $[X_1,\ \tilde{X}_3]=0,\ [X_2,\ \tilde{X}_3]=(f^1)^{-\frac{3}{2}}X_2,$ maintaining the solvable structure $\{\tilde{X}_3,\ X_2,\ X_1\}$. From proposition \ref{propn2} we immediately recover a further independent closed one-form, namely $\tilde{X}_3 \righthalfcup X_1 \righthalfcup \Omega_\theta$  even though $\{X_2,\ \tilde{X}_3,\ X_1\}$  is not itself a solvable structure. As a result we have another first integral of $D_V,$ namely,
$$f^2:=cu+u_{xx}.$$
Using Theorem \ref{sherr result} we obtain a third integral
$$f^3:=x-ct+c^{-\frac{1}{2}}\tan^{-1}\left(c^{-\frac{1}{2}}\frac{u_{xxx}}{u_{xx}}\right)$$
so that the integral manifolds of $D_V$ are the common levels sets of $f^1,\ f^2,\ f^3$ and the solutions of \eqref{Ex1:eq} is the 3 parameter family obtained by eliminating all the derivatives from these codimension 3 level set equations and solving for $u.$

\end{xmpl}

\section{The Korteweg de Vries equation}

The  Korteweg de Vries equation
\begin{equation}
  u_t=-uu_x+u_{xxx}
\end{equation}
produces the canonical example of a nonlinear solitary travelling wave.
Applying the travelling wave ansatz described in the previous section we find that the third order pulled-back  contact system $D_V^{\ast}$ on $J^3(\R^2,\R)$ is generated by
\begin{align*}
  \theta^1&:=du+\frac{F}{c}(dx-cdt),\\
  \theta^2&:=dF+cu_{xx}(dx-cdt),\\
  \theta^3&:=du_{xx}-u_{xxx}(dx-cdt),
\end{align*}
with $F=\frac{c}{c-u}~u_{xxx}.$

\noindent From theorem~\ref{propn 1} $D_V^{\ast}$ is clearly Frobenius integrable, that is
 \[d\theta^a\wedge\Omega_\theta=0.\]
Moreover, and again from theorem~\ref{propn 1}, $d\Omega_\theta=0$ so that locally, \[\Omega_\theta=df^1\wedge df^2\wedge df^3.\]
The corresponding Vessiot distribution $D_V$ is generated by
\begin{align*}
V_{1}&:=\vp{t}+F\vp{u}-cu_{xxx}\vp{u_{xx}}+\frac{c}{(c-u)^2}((c-u)^3u_{xx}-u_{xxx}^2)\vp{u_{xxx}},\\
V_{2}&:=\vp{x}-\frac{F}{c}\vp{u}+u_{xxx}\vp{u_{xx}}-\frac{1}{(c-u)^2}((c-u)^3u_{xx}-u_{xxx}^2)\vp{u_{xxx}}
\end{align*}
The common level sets of $f^\alpha$ project to graphs of travelling wave solutions. There are various ways to find the $f^\alpha,$ for example we could use a solvable structure and proposition \ref{propn2} since $\Omega_\theta$ is closed. However, the KdV problem does not admit a projectable solvable structure. As it happens it is a simple matter to find two of  the $1-$form factors of $\Omega_\theta$ directly and in a way that connects to known results. For the sake of convenience, we re-label the variables  \[y_1:=x-ct,y_2:=u,~y_3:=F,~y_4:=u_{xx}.\]

Now find the closed one-form factors, $dH$, of $\Omega_\theta:$
\begin{equation}\label{dh eq}
dH\wedge\Omega_\theta=0.
\end{equation}
%where
%\[dH=H_{y_1}~ dy_1+H_{y_2}~ dy_2+H_{y_3} ~dy_3+H_{y_4}~ dy_4.\]
Equation \eqref{dh eq} implies
\[\pd{H}{{y_1}}-\frac{1}{c}{y_3}\pd{H}{{y_2}}-c{y_4}\pd{H}{{y_3}}-\frac{1}{c}(c-u){y_3}\pd{H}{{y_4}}=0.\] So we need integral curves of
\[X:=\vp{y_1}-\frac{1}{c}y_3\vp{y_2}-cy_4\vp{y_3}-\frac{(c-y_2)}{c}y_3\vp{y_4}.\]
This gives the well known ODE
\begin{equation}
  y_2'''+(y_2-c)y_2'=0,
\end{equation}
where $'\equiv \ode{y_1},\quad y_1:=x-ct,\quad y_2:=u.$ That is,
\[ u'''+(u-c)u'=0,\]
This leads directly to closed forms for two of the first integrals:
\begin{align*}
f^1&=u''+u(\frac{1}{2}u-c)=u_{xx}+u(\frac{1}{2}u-c),\\
f^2&={u'}^2+\frac{1}{3}u^2(u-c)-2f^1u = u^2_x+\frac{1}{2}u^2(3c-u)-2uu_{xx}.
\end{align*}
 Restricting to an arbitrary common level set of $f^1,~f^2$ gives
 \begin{align*}
u''+u(\frac{1}{2}u-c)=K,\quad {u'}^2+\frac{1}{3}u^2(u-c)-2f^1u=L.
\end{align*}
Applying the usual boundary conditions at infinity forces $K=0,~L=0$ and  gives  the soliton solution
\[u(x,t)=3c~\text{sech}^2\left(\frac{1}{2}\sqrt{c}(x-ct)+M\right).\]
In this expression $M$ is the value of the third integral, $f^3,$ which is famously indeterminate in the absence of boundary conditions.

\section{Conserved quantities and constants of the motion}

In this section we will demonstrate the construction of constants of the motion for evolution equations. In general these constants of the motion will not apply to all solutions but for those which share one or more first integrals.

\noindent To make things concrete, consider a second order evolution PDE of one dependent variable $u$ and two independent variables $t,~x$ of the form
\begin{equation}
  u_{t}=F(x,t,u,u_x,u_{xx})\label{2evol}
\end{equation}
For the moment we don't assume the travelling wave ansatz \eqref{Tw:eq}.

We give the following definitions:

\begin{defn}
  A {\em first integral} of an integrable distribution $D:=Sp\{X_1,\dots,X_p\}$ on $M^n$ is a smooth, non-constant (local) function on $M^n$ with $X_a(f)=0$ for all $a$.
\end{defn}
 \begin{defn}
A {\em conservation law} of \eqref{2evol},
\begin{equation*}
\pd{T}{t}+\pd{X}{x}=0,
\end{equation*}
involves two suitably integrable functions of $t,x,u,u_x,u_{xx}$, the {\em density} $T$ and the {\em flux} $X$. The resulting equation
\begin{equation*}
\ode{t}\int^a_b~Tdx=0
\end{equation*}
identifies $\int^a_b~Tdx$ as a {\em constant of the motion}. The integrand $T$ is called a {\em conserved density}.
 \end{defn}
Suppose that, as described in section 2, we have a particular and possibly reduced, integrable Vessiot distribution, $D_{V_\text{red}},$ for our evolution equation, spanned by $V_1,~V_2$ on $J^2(\R^2,\R).$ Without loss, they look like,
\begin{align*}
  V_1=\vp{x}+u_x\vp{u}+\ldots~,\quad \quad   V_2=\vp{t}+F\vp{u}+\ldots
\end{align*}
The integral manifolds $S$ of $D_{V_\text{red}}$ are the common level sets of three independent first integrals $f^\alpha$, so that $V_a(f^\alpha)=0$ and $df^1\wedge df^2\wedge df^3\neq 0$ (at least locally on $S$).  We emphasise that these integrals are specific to the particular solution of \eqref{2evol} determined by $D_{V_\text{red}}.$ By construction, $V_1,V_2$ project to the total derivatives $\ode{t},\ode{x}$ along the projected solutions on the total (graph) space $\R^2\times\R$.

Suppose that \[\overline {N}:=\{p\in S: f^\alpha(p)=c^\alpha\}\] is a common level set of the $f^\alpha.$ Then $V_1,~V_2$ are tangent to $\overline {N}$ and let $\gamma_1,~\gamma_2$ be integral curves of $V_1,~V_2$ through some point $p\in \overline {N}.$ Then $\gamma_1,~\gamma_2$ project by $\pi:J^2(\R^2,\R) \to \R^2\times\R$ to curves on the solution surface $N:=\pi(\overline N)$, $u=u(t,x;c^\alpha)$, and
\[\left(\vp{t}+F\vp{u}\right)\Big{|} _{\pi\circ \gamma_1},\quad \left(\vp{x}+u_x\vp{u}\right)\Big{|} _{\pi\circ \gamma_2}\] are tangent to these projected curves on $N.$ See Figure \ref{figure1}.

\usetikzlibrary{decorations.pathreplacing,decorations.markings}
\begin{figure}

\begin{center}

\begin{tikzpicture}[scale=0.4]

\def\a{6}

\coordinate (A1) at (4.75, 3.5+\a, 3);
    \coordinate (A2) at (4, 4.5+\a, -3);
    \coordinate (A3) at (-4, 5.5+\a, -4);
   \coordinate (A4) at (-3.75, 5.5+\a, 5);
%
%    \coordinate (A5) at (2.63, -.75, 0);
%    \coordinate (A6) at (3.2, -.17, 0);

  \draw[-, bend  left=18] (A1) to (A2) node [midway, right, xshift=5cm,yshift=10cm]{$\overline{N}\subset S$};
    \draw[-, bend right=8] (A2) to (A3);
    \draw[-, bend right=18] (A3) to (A4);
     \draw[-, bend left=8] (A4) to (A1);

%\draw[-,  bend left=8] (0.75,4.5+\a,1)  to (3.5,5+\a,2)node [midway, above] {\tiny $p$};
\draw[-, bend  left=8] (-1.4,3.8+\a,0.5)node [below right,xshift=0.2cm,yshift=0.3cm] {\Tiny $\gamma_1$}  to (4,6+\a,1);
\draw[-, bend  left=7] (-2,6.2+\a,1)  to (4,4+\a,1)node [below,xshift=-0.2cm,yshift=0.1cm] {\Tiny $\gamma_2$};

     \draw[fill=black] (1.59, 5.37+\a, 1.5) circle(0.07)node [above] {\Tiny $p$};
   \draw[-<] (4, 4+\a, 1) --(3.5, 4.3+\a, 1.1);

     \draw[<-] (-1.2, 3.8+\a, -0.2) --(-0.9, 3.9+\a, -0.42);

 \draw[->, ultra thick] (1.59, 5.37+\a, 1.5)  to (0.7,5.15+\a,1.7)node [below,right,xshift=-0.1cm,yshift=-0.2cm] {\Tiny $V_1$};
   \draw[->, ultra thick] (1.59, 5.37+\a, 1.5)  to (-1.8,0.57+\a,-10)node [below,right,xshift=-0.3cm,yshift=-0.17cm] {\Tiny $V_2$};
   %\draw[->](2.1,11.35,1.5)--(2.8,11.35,1.5)node[pos=0.8, above]{\tiny $V_2$};
    %  \draw[->](2.1,11.35,1.5)--(1.4,11.25,1.5)node[pos=0.8, above]{\tiny $V_1$};

        \draw[->](0.2,9,0.3)--(0.1,7.5,0.2)node[midway, left]{$\pi$};

\coordinate (A1) at (4.75, 3.5, 3);
    \coordinate (A2) at (4, 4.5, -3);
    \coordinate (A3) at (-4, 5.5, -4);
   \coordinate (A4) at (-3.75, 5.5, 5);
%
%    \coordinate (A5) at (2.63, -.75, 0);
%    \coordinate (A6) at (3.2, -.17, 0);

  \draw[-, bend  left=18] (A1) to (A2)node [midway, right, xshift=5cm,yshift=4cm]{ $N\subset  \R^2\times\R$};
    \draw[-, bend right=8] (A2) to (A3);
    \draw[-, bend right=18] (A3) to (A4);
     \draw[-, bend left=8] (A4) to (A1);

     %\draw [smooth, domain=120:360] plot ({cos(\x)}, {sin(\x)});

     \draw[-, bend  left=8] (-1.4,3.8,0.5)node [below right,xshift=0.2cm,yshift=0.3cm] {\Tiny {$\pi\circ\gamma_1$}}  to (4,6,1);
      \draw[-, bend  left=7] (-2,6.2,1)  to (4,4,1)node [below,xshift=-0.5cm,yshift=0.1cm] {\Tiny {$\pi\circ\gamma_2$}};
     \draw[fill=black] (1.53, 5.4, 1.53) circle(0.06)node [above,xshift=-0.28cm,yshift=0.1cm] {\Tiny {$\pi(p)$}};
     %\draw[-\big{>}] (-1.2,3.8,0.5) --(3.13, 5.022, 1.5)node [below right] {\tiny $\pi\circ\gamma_2$};
     %\draw[->] (1.15, 4.97, 1.5) --(1.13, 4.95, 1.5)node [ above left] {\tiny $\pi\circ\gamma_1$};
\draw[<-] (-1.2, 3.8, -0.2) --(-0.9, 3.9, -0.42);
\draw[-<] (4, 4, 1) --(3.5, 4.3, 1.1);

   \draw[->] (-4,-1,-2) -- (-2,-1.7,-2) node[anchor=north east]{$x$};
\draw[-] (-4,-1,-2) -- (-5,0.2,-4.6) node[anchor=north west]{$u$};
\draw[->] (-4,-1,-2) -- (-2.4,1.4,6.8) node[anchor=south]{$t$};

\draw[->] (1.5,-2,1.5) -- (3.5,-2.1,2.9) node[right]{$\frac {\partial}{\partial x}$};

\draw[dashed,-] (1.5,-2,1.5) -- (0.8,5.4,-0.4) node[left]{};
\draw[->] (1.5,-2,1.5) -- (-0.5,-3.5,-0.1) node[left]{$\frac {\partial}{\partial t}$};
\draw[dashed, -](1.5,-0.5,1.5)--(0.8,5.5,-0.4);
\draw[dashed, -](1.5,6.5,1.46)--(0.8,10.7,-0.4);
\draw[fill=black](1.5,-2,1.5) circle(0.03);

%\draw[thick,->] (0,0,0) -- (0,0,3) node[anchor=south]{$z$};
\end{tikzpicture}
\end{center}
\caption{} \label{figure1}
\end{figure}

Now consider the meaning of  \[I:=\int_a^b~G~dx \quad \text{with}\quad G=G(x,t,u,\ldots,u_{xx},\dots).\]
For the solution corresponding to $D_{V_\text{red}}$
%\[\int_a^b~G~dx=\int_a^b~G\circ\gamma_2~dx=\int_{\gamma_2(a)}^{\gamma_2(b)}~G~dx,\]
\[\int_a^b~G~dx=\int_a^b~G\circ\gamma_2~dx,\]
that is, the integral of $G$ on $S$ along an integral curve $\gamma_2: [a,b] \to \bar N$ of $V_2$ from
\begin{align*}
  \gamma_2(a)&:=(t,a,u(t,a),\ldots,u_{xx}(t,a,u(t,a)))\\
  \text{to} &\quad (t,b,u(t,b),\ldots,u_{xx}(t,b,u(t,b))=:\gamma_2(b)
\end{align*}
Thus the functional $I$ is an integral of the restriction of $G$ to a solution surface and of course all the integral curves of $V_2$ project to curves on solution surfaces.

Moreover, the total time derivative of $I$ has the meaning $V_1\left(\int_{a}^{b}~G\circ \gamma_2~dx\right),$ because total time differentiation is just the projection of the directional derivative in the direction of $V_1.$

Now $[V_1,V_2]=0$ by construction, so
\[V_1\Big{(}\int_{a}^{b}~G\circ \gamma_2~dx\Big{)}=\int_{a}^{b}~V_1(G)\circ \gamma_2~dx.\]
Clearly, if $G$ is a composite of $f^\alpha$ then $V_1(G)=0.$ Further, $G$ then takes a single constant value, $l$ say, on the integral manifold of $D_{V_\text{red}}^{\ast}$ on which $\gamma_2$ lies, so
% \[ G\circ\gamma_2=l,\quad \quad \text{and}\quad V_1(G)\circ \gamma_2=0.\]
%Hence
  \[\int_{a}^{b}~G\circ \gamma_2~dx=l(b-a),\]
and
 \[V_1\Big{(}\int_{a}^{b}~G\circ \gamma_2~dx\Big{)}=0.\]
Typical conservation laws look like
\[\ode{t}\int_a^b~G(x,y,u,\ldots,u_{xxx},\ldots)~dx=0,\]
and so any composite $G$ of the $f$'s  produces a ``constant of the motion'' for the solution determined by these $f$'s.

As an example consider all regular solutions of the Korteweg de Vries equation satisfying the travelling wave ansatz \eqref{Tw:eq}. They all have first integral $f^1:=u_{xx}+u(\frac{1}{2}u-c)$ and so
\[\int_{a}^{b}~u_{xx}+u(\frac{1}{2}u-c)~dx\]
is a constant of the motion for some suitable interval $[a,b]$.

Returning to the general situation for $G:S\rightarrow \R$, where $G$ is not a composite of first integrals, but where we continue to consider the solution determined by our particular $D_{V_\text{red}},$
\[\ode{t}\int_a^b~G~dx=V_1\Big{(}\int_{a}^{b}~G\circ \gamma_2~dx\Big{)}=\int_{a}^{b}~V_1(G)\circ\gamma_2~dx.\]

The integral curve $\gamma_2:[a,b]\rightarrow \bar N$ represents an element of a family of curves obtained by varying the value of the $t$ co-ordinate of the point $\gamma_2(a)\in S.$ If $g_t:S\rightarrow S$ is the one-parameter group generated by $V_1$ then these curves are the images $g_t\circ\gamma_2$ of $\gamma_2.$
Because $[V_1,V_2]=0,$ this family of curves lies on an integral manifold of $D_{V_\text{red}}^{\ast},$ that is a lifted solution of the PDE and a common level set of the $f^\alpha.$ See Figure \ref{figure2}.

\begin{figure}

\begin{center}

\begin{tikzpicture}[scale=0.7]

\def\a{0.5}
\def\b{1}
\coordinate (A1) at (5.25, 1.5, 3);
    \coordinate (A2) at (5.5, 1.5, -2);
    \coordinate (A3) at (-1, 3.5, 0);
   \coordinate (A4) at (-.75, 1.5, 1.5);
%
%    \coordinate (A5) at (2.63, -.75, 0);
%    \coordinate (A6) at (3.2, -.17, 0);

  \draw[-, bend  left=8] (A1) to (A2);
    \draw[-, bend right=22] (A2) to (A3);
    \draw[-, bend right=8] (A3) to (A4);
     \draw[-, bend left] (A4) to (A1);

%
%\draw[-,  bend left=60] (0.75,4.5, 1)  to (3.5,5,2)node [midway, above] {\tiny $p$};
%     \draw[fill=black] (2.1, 5.35, 1.5) circle(0.03);
%     \draw[->] (3.1, 5.048,  1.5) --(3.13, 5.022, 1.5)node [above right] {\tiny $\gamma_2$};
%     \draw[->] (1.15, 4.97, 1.5) --(1.13, 4.95, 1.5)node [ above left] {\tiny $\gamma_1$};

\draw[->] (-2,-2,-2) -- (4.5,-2,-2) node[anchor=north east]{\Tiny $x$};
\draw[-] (-2,-2,-2) -- (-2,0,-2) node[anchor=north west]{\Tiny $u$};
\draw[->] (-2,-2,-2) -- (-2,-2,2) node[anchor=south]{\Tiny$t$};

\draw[->] (1.99,-2,-2) -- (2,-2,-2);

\draw [-](0.5, 2.75, 1.5)node [above, xshift=1cm, yshift=0.2cm] {} .. controls (2.5,3.5, 1.5)and (2.75,2, 1.5) .. (4.5,2.5, 1.5)node [right, xshift=1cm,yshift=0.3cm] {\Tiny $\bar{N}$} ;
\draw (0.5, 2.75, 1.5) circle (0.03);
\draw (4.5,2.5, 1.5) circle (0.03);

\draw [thick, ->](1.5, 2.96, 1.5)--(1.52, 2.96, 1.5);

\draw [-](0.5+\a, 2.75+\b, 1.5)node [above, xshift=1cm, yshift=0.03cm] {\tiny $\gamma_2$} .. controls (2.5+\a,3.5+\b, 1.5) and (2.75+\a, 2+\b, 1.5) .. (4.5+\a, 2.5+\b, 1.5);
\draw (0.5+\a, 2.75+ \b, 1.5) circle (0.03);
\draw (4.5+\a,2.5+ \b, 1.5) circle (0.03);

\draw [thick, ->](2, 3.96, 1.5)--(2.02, 3.96, 1.5);

\draw[dashed](0.5+\a, 2.75+ \b, 1.5)--(0.5+\a, -0.65, 1.5)[fill=black]circle (0.03)node[above, right, yshift=0.15cm]{\Tiny $a$};
\draw[dashed](4.5+\a,2.5+ \b, 1.5)--(4.5+\a,-0.65, 1.5)[fill=black]circle (0.03)node[above, right, yshift=0.15cm]{\Tiny $b$};

\draw[dashed](0.5, 2.75, 1.5)--(0.5, -1.25, 1.5);
\draw[dashed](4.5,2.5, 1.5)--(4.5,-1.25, 1.5);

\draw [-](0.5, -1.25, 1.5)[fill=black]circle (0.03)--(4.5,-1.25, 1.5)[fill=black]circle (0.03);
\draw [->](1.5, -1.25, 1.5)--(1.56, -1.25, 1.5);

\draw[->](0.35+\a, 2.5+ \b, 1.5)--(0.6, 3, 1.5)node[midway, left]{\Tiny$g_t$};

\draw[->](4.35+\a,2.25+ \b, 1.5)--(4.6,2.75, 1.5) node[midway, left]{};

\draw [dashed](0.5, -1.25, 1.5)--(0.5, -1.25, 4);
\draw [dashed](4.5,-1.25, 1.5)-- (4.5,-1.25, 4);

\draw [->](5,-0.8, 1.65)--(5,-0.8, 2.25)node[midway, right]{\Tiny$t$};
\end{tikzpicture}
\end{center}
\caption{}\label{figure2}
\end{figure}
\bigskip
In the situation, where \[\ode{t}\int_a^b~G~dx=0,\] we have
\[V_1\Big{(}\int_{a}^{b}~G\circ\gamma_2~dx\Big{)}=0=\int_{a}^{b}~V_1(G)\circ\gamma_2~dx.\]

The interpretation is that the integral is independent of the integral curve $g_t\circ\gamma_2$ along which the integral is performed. This does not imply that $V_1(G)=0.$  In other words, conserved densities are not necessarily first integrals but first integrals are always conserved densities.

A simple example of this is the well-known conserved density $T_3:=u^3+\frac{1}{2}u_x^2$ of the Korteweg de Vries equation. For all travelling wave solutions
\[\ode{t}\int_{-a}^a~T_3~dx=\int_{-a}^a~V_1(T_3)~dx=-c\int_{-a}^a~\frac{du^3}{dx}~dx=0,\]
since $V_1(T_3)=V_1(u^3)=-cV_2(u^3)$, while $T_3$ is clearly not a first integral.

\section*{Acknowledgements}
Naghmana Tehseen acknowledges the support of a La Trobe University  postgraduate research award. She would also like to thank Peter Vassiliou for helpful discussions and Mumtaz Hussain for helping with the diagrams.

%\bibliographystyle{amsplain}
%\bibliography{SOPDE_2}

\providecommand{\bysame}{\leavevmode\hbox to3em{\hrulefill}\thinspace}
\providecommand{\MR}{\relax\ifhmode\unskip\space\fi MR }
% \MRhref is called by the amsart/book/proc definition of \MR.
\providecommand{\MRhref}[2]{%
  \href{http://www.ams.org/mathscinet-getitem?mr=#1}{#2}
}
\providecommand{\href}[2]{#2}

\end{document}